\newcommand{\Ker}{\mathrm{Ker\,}}
\newcommand{\Coker}{\mathrm{Coker\,}}
\newcommand{\coker}{\mathrm{coker\,}}
\newcommand{\im}{\mathrm{im\,}}
\newcommand{\coim}{\mathrm{coim\,}}
\newcommand{\id}{\mathrm{id}}
\newtheorem{theorem}{Theorem}[section]
\newtheorem{lemma}{Lemma}[section]
\newtheorem{corollary}{Corollary}[section]
\begin{document}

\title[The Two-Square Lemma and the Connecting Morphism]
{The Two-Square Lemma and the Connecting Morphism}
\thanks{\textit{Mathematics Subject Classification.} 18A20 \\
\textit{Key words and phrases}: preabelian category, kernel, cokernel, pullback,
pushout, Snake Lemma, connecting morphism \\
Partially supported by the Russian Foundation for Basic Research (Grant~09-01-00142-a),
the State Maintenance Program for the Leading Scientific Schools and
Junior Scientists of the Russian Federation (NSh-6613.2010.1),
and the Integration Project ``Quasiconformal Analysis and Geometric Aspects 
of Operator Theory'' of the Siberian Branch
and the Far East Branch of the Russian Academy of Sciences}
\author{Yaroslav Kopylov}
\address{Yaroslav Kopylov
\newline\hphantom{iii} Sobolev Institute of Mathematics,
\newline\hphantom{iii} Pr. Akademika Koptyuga 4,
\newline\hphantom{iii} 630090, Novosibirsk, Russia
\vspace{3mm}
\newline\hphantom{iii} Novosibirsk State University}%
\email{yakop@math.nsc.ru}
\maketitle
\begin{abstract}
We obtain a generalization of the Two-Square Lemma proved for abelian categories
by Fay, Hardie, and Hilton in 1989 and (in a special case) for preabelian categories
by Generalov in 1994.  We also prove the equivalence up to sign of two definitions of a connecting
morphism of the Snake Lemma.
\end{abstract}

\section*{Introduction}

One of the most important diagram assertions in homological algebra is the so-called Snake Lemma
which makes it possible to obtain homological sequences from short exact sequences of complexes.
It always holds in an abelian category. However, in the more general context of preabelian categories,
The Snake Lemma fails without additional assumptions on the initial diagram. The main reasons are
that the notions of kernel and monomorphism (respectively, of cokernel and epimorphism) do not coincide
in a preabelian category and that kernels (respectively, cokernels) do not ``survive'' under
pushouts (respectively, pullbacks).

The validity of the Snake Lemma in the nonabelian case was studied by several authors
for classes of additive categories (see, e.g., \cite{Bue2010,Ge94,GlK02,KoK00,KoK09}) and in some classes 
of nonabelian categories (see, e.g., \cite{BoBou04, Grand91-1}).
The key properties of the morphisms in the initial diagram required for the exactness of
the $\Ker$-$\Coker$-sequence are ``strictness'' and stability under pushouts (pullbacks) of some
monomorphisms (epimorphisms), or their weaker analogs ``exactness'' and ``modularity'' \cite{Grand91-1}.

Even the existence of a connecting morphism, valid in abelian categories (and even in quasi-abelian
categories \cite{KoK00} and in their nonadditive counterpart, Grandis homological categories \cite{Grand91-1}),
cannot be guaranteed in general preabelian categories without extra ``semi-stability'' assumptions
(see \cite{Ge94}). The construction of the connecting morphism in \cite{Ge94} involves a preabelian
version of a special case of the Two-Square Lemma of Fay--Hardie-Hilton \cite[Lemma~3]{FHaHi89}.

\begin{theorem}[The Two-Square Lemma]\label{l2sq}
Suppose that the following diagram in an abelian category has exact rows:
\begin{equation}\label{main}
\CD
A @>\psi>> B @>\varphi>> C  \\
@V\alpha VV @V\beta VV @V\gamma VV  \\
A' @>>\psi'> B' @>>\varphi'> C'\,.
\endCD
\end{equation}
Let
\begin{equation}\label{pb1}
\CD
Q' @>\sigma>> C \\
@V\sigma' VV  @V\gamma VV \\
B' @>>\varphi'> C'
\endCD
\end{equation}
be a pullback and let
\begin{equation}\label{po1}
\CD
A @>\psi>> B \\
@V\alpha VV  @V\tau VV \\
A' @>>\tau'> Q
\endCD
\end{equation}
be a pushout.

Then

(i) there exists a unique $\theta:Q\rightarrow B'$ such that $\theta\tau=\beta$, $\theta\tau'=\psi'$;

(ii) there exists a unique $\rho:B\rightarrow Q'$ such that $\sigma\rho=\varphi$, $\sigma'\rho=\beta$;

(iii) there exists a unique $\eta:Q\rightarrow Q'$ such that $\eta\tau=\rho$, $\sigma'\eta=\theta$,
$\sigma\eta\tau'=0$.
\end{theorem}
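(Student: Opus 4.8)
The plan is to obtain all three morphisms purely from the universal properties of the pushout \eqref{po1} and the pullback \eqref{pb1}, feeding in only the commutativity of the two squares in \eqref{main} (that is, $\beta\psi=\psi'\alpha$ and $\varphi'\beta=\gamma\varphi$) together with the fact that the rows are complexes, $\varphi\psi=0$ and $\varphi'\psi'=0$; notably, only this last consequence of exactness will be used, not exactness in full. For (i), a morphism out of the pushout $Q$ amounts to a pair $B\to B'$, $A'\to B'$ agreeing on $A$. The pair $(\beta,\psi')$ agrees on $A$ precisely because the left square commutes, $\beta\psi=\psi'\alpha$, so the universal property yields the unique $\theta$ with $\theta\tau=\beta$, $\theta\tau'=\psi'$. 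Dually for (ii), a morphism into the pullback $Q'$ is a compatible pair $B\to C$, $B\to B'$, and $(\varphi,\beta)$ is compatible because the right square commutes, $\varphi'\beta=\gamma\varphi$; this gives the unique $\rho$ with $\sigma\rho=\varphi$, $\sigma'\rho=\beta$.

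For (iii) the idea is to realize $\eta$ as the mediating morphism of a suitable cocone $(\rho,\xi)$ on the pushout $Q$, where the second leg $\xi\colon A'\to Q'$ must first be constructed. Since $\varphi'\psi'=0=\gamma\cdot 0$, the pair $(0,\psi')\colon A'\to C,\ A'\to B'$ is compatible over $C'$, so the pullback furnishes a unique $\xi\colon A'\to Q'$ with $\sigma\xi=0$ and $\sigma'\xi=\psi'$. To glue $\rho$ and $\xi$ through the pushout I must check they agree on $A$, i.e. $\rho\psi=\xi\alpha$. This is the main computational step, and I would verify it by postcomposing with the jointly monic legs $\sigma,\sigma'$ of the pullback: $\sigma\rho\psi=\varphi\psi=0=\sigma\xi\alpha$ uses that the top row is a complex, while $\sigma'\rho\psi=\beta\psi=\psi'\alpha=\sigma'\xi\alpha$ uses the left square. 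Hence $\rho\psi=\xi\alpha$, and the pushout delivers a unique $\eta\colon Q\to Q'$ with $\eta\tau=\rho$ and $\eta\tau'=\xi$.

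It then remains to read off the three asserted identities and uniqueness. Immediately $\eta\tau=\rho$ and $\sigma\eta\tau'=\sigma\xi=0$. For $\sigma'\eta=\theta$ I would precompose with $\tau$ and $\tau'$: $\sigma'\eta\tau=\sigma'\rho=\beta=\theta\tau$ and $\sigma'\eta\tau'=\sigma'\xi=\psi'=\theta\tau'$, and since $\tau,\tau'$ are jointly epic (pushout legs) this forces $\sigma'\eta=\theta$. Uniqueness follows the same template: any $\eta'$ meeting the three conditions satisfies $\eta'\tau=\rho$, while $\sigma\eta'\tau'=0=\sigma\xi$ and $\sigma'\eta'\tau'=\theta\tau'=\psi'=\sigma'\xi$ give $\eta'\tau'=\xi$ by joint monicity of $\sigma,\sigma'$; joint epicity of $\tau,\tau'$ then yields $\eta'=\eta$. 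I expect the only genuine obstacle to be the compatibility identity $\rho\psi=\xi\alpha$, since this is exactly where both the hypothesis on the top row and the left-square commutativity must be brought to bear simultaneously; everything else is a formal manipulation of the two universal properties.
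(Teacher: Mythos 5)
Your proof is correct and coincides with the argument the paper relies on: the paper gives no proof of Theorem~\ref{l2sq}, deferring instead to Fay--Hardie--Hilton, whose proof is precisely this universal-property construction --- the auxiliary $\xi\colon A'\to Q'$ with $\sigma\xi=0$, $\sigma'\xi=\psi'$, glued with $\rho$ through the pushout, with joint monicity of $\sigma,\sigma'$ and joint epicity of $\tau,\tau'$ handling the identity $\sigma'\eta=\theta$ and uniqueness. In particular, your observation that only the complex conditions $\varphi\psi=0$, $\varphi'\psi'=0$ (not full exactness) are needed is exactly why, as the paper remarks, the proof remains valid in any preabelian category.
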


The proof in \cite{FHaHi89} remains valid in any preabelian category. The Two-Square Lemma of
\cite{FHaHi89} also claims that \textit{if $\psi'$ is a monomorphism then so is $\eta$} and 
\textit{if $\varphi$ is an epimorphism then so is $\eta$}. 

In~\cite{Ge94}, Generalov proved the following assertion:

\begin{theorem}\label{geniso}
Consider a diagram of the form~(\ref{main}) in a preabelian category. If $\psi'$ is a semi-stable
kernel and $\varphi$ is a semi-stable cokernel then $\eta$ is an isomorphism.
\end{theorem}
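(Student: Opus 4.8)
The plan is to show that $\eta$ is simultaneously a kernel and a cokernel, and then to invoke the elementary preabelian fact that such a morphism is an isomorphism: if $\eta=\Ker g$ is a kernel and is also epic, then $g\eta=0$ together with epimorphy forces $g=0$, whence $\eta=\Ker 0=\id$ is invertible; dually, a cokernel that is monic is an isomorphism. Thus the whole argument reduces to upgrading the monic/epic properties of $\eta$ to the corresponding \emph{strict} (normal) properties, and this is exactly what the semi-stability hypotheses are designed to supply.

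First I would harvest the information that is already free. Since $\psi'$ is a kernel it is in particular a monomorphism, and since $\varphi$ is a cokernel it is in particular an epimorphism; by the monomorphism and epimorphism addenda to the Two-Square Lemma (whose proofs, as noted above, remain valid in any preabelian category) we obtain that $\eta$ is both a monomorphism and an epimorphism. In an abelian category the proof would stop here, and it is precisely the collapse of the implication ``monic $+$ epic $\Rightarrow$ iso'' in the preabelian world that makes Theorem~\ref{geniso} nontrivial. So the genuine task is to certify that $\eta$ is normal on each side.

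Next I would extract the strict morphisms concealed in the construction. Reading off the defining relations of part~(iii) of Theorem~\ref{l2sq} and using exactness of the rows, one checks that $\sigma\eta$ is the morphism $Q\to C$ induced by the compatible pair $(\varphi,0)$, hence equals $\coker\tau'$: indeed $C=\coker\psi=\coker\tau'$, the second equality being the standard fact that a pushout preserves the cokernel of the parallel morphism, and $\varphi=\coker\psi$ because $\varphi$ is a cokernel with $\Ker\varphi=\im\psi$. Dually, since $\psi'$ is a kernel one has $\psi'=\Ker\varphi'$, and the relations $\sigma(\eta\tau')=0$, $\sigma'(\eta\tau')=\psi'$ identify $\eta\tau'$ with $\Ker\sigma$. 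The role of the hypotheses is now to transport strictness across the two universal constructions: semi-stability of the kernel $\psi'$ guarantees that the pushed-out kernel leg remains a kernel, and semi-stability of the cokernel $\varphi$ guarantees that the pulled-back cokernel leg remains a cokernel. Propagating these identifications through the universal properties of $Q$ and $Q'$ then forces $\eta$ itself to be a kernel (from the $\psi'$-side) and a cokernel (from the $\varphi$-side), so that the reduction in the first paragraph applies and $\eta$ is an isomorphism.

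I expect the real obstacle to be this last transport step, namely making rigorous how semi-stability carries the normality of $\psi'$ and $\varphi$ across the pushout $Q$ and the pullback $Q'$ and pins it on $\eta$ rather than merely on the auxiliary morphisms $\sigma\eta$ and $\eta\tau'$. The difficulty is intrinsic: in a general preabelian category pushouts need not preserve kernels nor pullbacks cokernels, so one cannot simply diagram-chase as in the abelian case, and it is exactly the semi-stability assumptions that repair these two failures. Everything else is a bookkeeping exercise in the universal properties already made available by the Two-Square Lemma.
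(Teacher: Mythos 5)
Your overall strategy---reduce the theorem to ``$\eta$ is a kernel and epic, hence an isomorphism''---is the right one, and your identifications $\sigma\eta=\coker\tau'$ and $\eta\tau'=\ker\sigma$ are correct intermediate steps; they are exactly the ones appearing in the proof of Theorem~\ref{twosql}(ii), together with $\tau'=\ker(\sigma\eta)$ and the strictness of $\sigma\eta$. But the proof has a genuine gap, and it is the one you yourself flag at the end: you never show how these identifications force $\eta$ \emph{itself} to be a kernel, and this step is not ``bookkeeping in universal properties''---it is the technical heart of the whole result. Note that an inverse $Q'\to Q$ would be a map \emph{out of} a pullback and \emph{into} a pushout, so neither universal property can be used to construct it; some genuinely new argument is required. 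The paper supplies it as Lemma~\ref{rumb}: for a commutative ladder $A\to B_1\to C$, $A\to B_2\to C$ with identities at the ends, $p_1=\ker q_1$, $p_2=\ker q_2$, where $p_2$ and $\im q_1$ are semi-stable kernels and $q_1$ is strict, the middle vertical morphism is a semi-stable kernel. Applied to the ladder with rows $A'\overset{\tau'}\longrightarrow Q\overset{\sigma\eta}\longrightarrow C$ and $A'\overset{\eta\tau'}\longrightarrow Q'\overset{\sigma}\longrightarrow C$ and middle vertical $\eta$, it yields that $\eta$ is a (semi-stable) kernel, and then your first-paragraph reduction finishes. The proof of Lemma~\ref{rumb} is itself nontrivial: one forms the pushout of $p_1$ and $p_2$, then a second pushout, and manufactures a morphism $\mu$ with $\mu r$ equal to a composite of semi-stable kernels, concluding via Lemma~\ref{semistab}(i),(ii). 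Nothing in your proposal substitutes for this construction.

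A second, smaller error: you assert that the monomorphism/epimorphism addenda of Fay--Hardie--Hilton ``remain valid in any preabelian category.'' The paper claims this only for the existence statement, Theorem~\ref{l2sq}(i)--(iii); the addenda are precisely what fails in general preabelian categories (pushouts of kernels can degenerate, even to zero morphisms), and determining when they survive is the content of Theorem~\ref{twosql}(i), where monicity of $\eta$ requires $\psi'$ to be a \emph{semi-stable kernel} together with $\bar\varphi$ monic, and dually for epicity. Under the hypotheses of Theorem~\ref{geniso} the needed conclusions do hold---$\psi'$ being a kernel makes $\bar\psi'$ an isomorphism, $\varphi$ being a cokernel makes $\bar\varphi$ an isomorphism and $\im\varphi=\id_C$, so both halves of Theorem~\ref{twosql}(i) apply---but this must be \emph{proved} via the semi-stability argument (through $\tau'=\ker(\sigma\eta)$), not quoted from the abelian case. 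So the correct assembly is: Theorem~\ref{twosql}(i) (dual half) gives $\eta$ epic, Theorem~\ref{twosql}(ii) (first half) gives $\eta$ a kernel, and kernel $+$ epic gives the isomorphism; both inputs rest on machinery your proposal leaves unbuilt.
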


Below we study the question when $\eta$ is monic, epic, a kernel, a cokernel in a preabelian category.

The article is organized as follows. In Section~\ref{preabcat}, we give basic definitions and
facts about preabelian categories. In Section~\ref{twos}, we prove the main assertion
of the article, Theorem~\ref{twosql}, explaining what conditions on the initial diagram~(\ref{main})
guarantee each of the above-mentioned properties of $\eta$. In Section~3, we prove the equivalence
of two definitions of a connecting morphism of the Snake Lemma in a preabelian category.

\section{Preabelian Categories}\label{preabcat}

A {\it preabelian category} is an additive category with kernels and cokernels.

In a preabelian category, every morphism $\alpha$ admits a canonical decomposition
$$
\alpha=(\im\alpha)\bar\alpha(\coim\alpha),
\quad
\text{where }\im\alpha=\ker\coker\alpha,
\
\coim\alpha=\coker\ker\alpha.
$$
A morphism $\alpha$ is called {\it strict\/}
if $\bar\alpha$ is an isomorphism. A preabelian category is abelian if and only if
every morphism in it is strict. Note that
\begin{gather*}
\text{strict monomorphisms $=$ kernels}, \\
\text{strict epimorphisms $=$ cokernels}.
\end{gather*}

\begin{lemma}\label{preab} \cite{BD68,Ke69,KoK00,RiWa77}
The following hold in a preabelian category.

(i)~A morphism $\alpha$ is a kernel if and only if $\alpha=\operatorname{im}\alpha$,
a morphism $\alpha$ is a cokernel if and only if $\alpha=\coim\alpha$;

(ii)~A morphism $\alpha$ is strict
if and only if $\alpha$  is representable as
$\alpha=\alpha_1 \alpha_0$, where
$\alpha_0$ is a cokernel, $\alpha_1$ is a kernel; in~ this case,
$\alpha_0=\coim\alpha$ and $\alpha_1=\operatorname{im}\alpha$;

(iii)~Suppose that the commutative square
\begin{equation}\label{mainsq}
\CD
C @>\alpha>> D \\
@VgVV  @VVfV \\
A @>>\beta> B
\endCD
\end{equation}
is a pullback. Then $\ker f=\alpha\ker g$. If $f=\ker h$ for some $h$ then $g=\ker(h\beta)$.
In particular, if $f$ is monic then $g$ is monic; if $f$ is a kernel then
$g$ is a kernel.

In the dual manner, assume that (\ref{mainsq}) is a pushout. Then $\coker f=\beta\coker g$.
If $g=\coker e$ for some $e$ then $f=\coker(\alpha e)$. In particular, if $g$ is epic
then $f$ is epic; if $g$ is a cokernel then $f$ is a cokernel.
\end{lemma}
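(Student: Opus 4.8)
The plan is to prove the three parts in sequence using only the universal properties of kernels, cokernels, pullbacks and pushouts, so that no element arguments are needed; part~(ii) will invoke~(i), and part~(iii) will reuse its own monomorphism clause internally. For~(i), since $\im\alpha=\ker\coker\alpha$ by definition, $\im\alpha$ is automatically a kernel, so one implication is free. For the converse I would assume $\alpha=\ker\beta$; then $\beta\alpha=0$ makes $\beta$ factor through $\coker\alpha$, while $(\coker\alpha)\alpha=0$ makes $\alpha$ factor through $\ker\coker\alpha=\im\alpha$. Conversely $(\coker\alpha)(\im\alpha)=0$ gives $\beta(\im\alpha)=0$, so $\im\alpha$ factors through $\ker\beta=\alpha$. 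As $\alpha$ and $\im\alpha$ are monomorphisms factoring through each other, they coincide as subobjects. The cokernel statement is exactly dual.

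For~(ii), the forward direction is immediate from the canonical decomposition $\alpha=(\im\alpha)\bar\alpha(\coim\alpha)$: when $\alpha$ is strict, $\bar\alpha$ is an isomorphism, and absorbing it into one factor exhibits $\alpha$ as a kernel composed with a cokernel. For the converse, suppose $\alpha=\alpha_1\alpha_0$ with $\alpha_0$ a cokernel and $\alpha_1$ a kernel. Since $\alpha_1$ is monic, $\ker\alpha=\ker\alpha_0$, so part~(i) gives $\alpha_0=\coker\ker\alpha_0=\coker\ker\alpha=\coim\alpha$; dually, $\alpha_0$ epic yields $\coker\alpha=\coker\alpha_1$ and hence $\alpha_1=\ker\coker\alpha=\im\alpha$. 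Substituting into $\alpha=(\im\alpha)(\coim\alpha)$ and comparing with the canonical decomposition, I would cancel the monomorphism $\im\alpha$ on the left and the epimorphism $\coim\alpha$ on the right to conclude $\bar\alpha=\id$, so $\alpha$ is strict.

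For~(iii) I would argue the pullback case and then dualize. To get $\ker f=\alpha\ker g$, the relation $f(\alpha\ker g)=\beta g\ker g=0$ makes $\alpha\ker g$ factor through $\ker f$, while the cone $(\ker f,0)$ over $f$ and $\beta$ together with the pullback property produces a unique $u$ with $\alpha u=\ker f$ and $gu=0$; the latter forces $u$ through $\ker g$, giving the reverse factorization, and checking that $\alpha\ker g$ is monic (again via pullback uniqueness) lets me identify the two subobjects. For the second claim, if $f=\ker h$ then $(h\beta)g=hf\alpha=0$, so $g$ factors through $\ker(h\beta)$; to see equality I would take any $t$ with $h\beta t=0$, factor $\beta t=fs$ through $f=\ker h$, and apply the pullback property to lift $t$ through $g$, uniqueness coming from monicity of $g$. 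The monicity clause is the standard fact that pullbacks preserve monomorphisms, and the kernel clause follows from it since part~(i) lets me write $f=\ker\coker f$, whence $g=\ker(\coker f\cdot\beta)$. The pushout statements are obtained by dualizing each step.

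I expect the only real obstacle to be the bookkeeping in~(iii): one must keep \emph{monic} and \emph{kernel} strictly separate, extract from each universal property both a comparison morphism and its uniqueness, and verify that the mutual factorizations genuinely equate subobjects rather than merely relate them. Once~(i) is in hand, the rest is routine.
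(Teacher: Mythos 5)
The paper itself gives no proof of Lemma~\ref{preab}: it is background material imported from \cite{BD68,Ke69,KoK00,RiWa77}, so there is no internal argument to compare yours against; what you have written supplies the omitted proof. Your argument is correct and is the standard one: in (i) the mutual factorization of the monomorphisms $\alpha$ and $\im\alpha$ through each other, in (ii) reduction to (i) followed by cancellation of the monomorphism $\im\alpha$ on the left and the epimorphism $\coim\alpha$ on the right, and in (iii) direct verification of the universal property of the kernel, with the needed monicity of $\alpha\ker g$ (respectively of $g$) extracted from the uniqueness clause of the pullback. Two precision points are worth making explicit when you write this up. First, the equalities produced by (i), such as $\alpha_1=\im\alpha$ and $\alpha_0=\coim\alpha$, hold only up to the canonical isomorphism relating two kernels (cokernels) of the same morphism; consequently the cancellation in (ii) yields that $\bar\alpha$ is an isomorphism rather than literally the identity --- which is exactly what strictness requires, so nothing is lost, but ``$\bar\alpha=\id$'' overstates it. Second, in the kernel clause of (iii) you invoke monicity of $g$ before stating the ``in particular'' mono-preservation clause; this is not circular, since $f=\ker h$ is monic and the fact that pullbacks preserve monomorphisms is proved directly from the uniqueness part of the pullback property, independently of the kernel clause, but the order of the two claims should be arranged accordingly.
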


A kernel $g$ in a preabelian category is called {\it semi-stable} \cite{RiWa77} if for every 
pushout of the form~(\ref{mainsq}) $f$ is a kernel too.
A {\it semi-stable cokernel} is defined in the dual way. Examples of non-semi-stable cokernels may be found,
for example, in \cite{BoDi06,Pr00,Ru08,Ru11} and non-semi-stable kernels are shown in~\cite{RiWa77}.
If all kernels and cokernels are semi-stable then the preabelian category is called {\it quasi-abelian}
\cite{Sch99}. 

\begin{lemma}\label{semistab}\cite{Ge92,RiWa77}
The following hold in a preabelian category:

(i) if $gf$ is a semi-stable kernel then so is $f$; if $gf$ is a semi-stable cokernel then so is $g$;

(ii) if $f$ and $g$ are semi-stable kernels (cokernels) and the composition $gf$ is defined then
$gf$ is a semi-stable kernel (cokernel);

(iii) a pushout of a semi-stable kernel is a semi-stable kernel; a pullback of a semi-stable cokernel
is a semi-stable cokernel.
\end{lemma}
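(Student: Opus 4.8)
The three assertions are intertwined, and the single engine behind all of them is the pasting (composition) law for pushouts together with the defining property of a semi-stable kernel. I would dispatch (iii) first, since it is purely formal. Let $g$ be a semi-stable kernel and let $f$ be its pushout along some morphism, so that $f$ is already a kernel; to see that $f$ is semi-stable, take an arbitrary pushout of $f$ and paste it onto the given square. The outer rectangle is then a pushout of $g$ (along the composite of the two top edges), and its right-hand edge is precisely the twice-pushed-out map. Since $g$ is semi-stable, that edge is a kernel, so every pushout of $f$ is a kernel and $f$ is semi-stable. The statement about semi-stable cokernels is dual: pass to the opposite category, which is again preabelian and interchanges kernels with cokernels and pushouts with pullbacks.

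For (ii) I would again use pasting to localize. Given semi-stable kernels $f$ and $g$, an arbitrary pushout of $gf$ decomposes, by the pasting law, into a pushout of $f$ followed by a pushout of $g$; by (iii) the two resulting maps $f'$ and $g'$ are semi-stable kernels, and the pushed-out copy of $gf$ is their composite $g'f'$. Thus (ii) reduces to the one statement that a composite of two semi-stable kernels is a kernel. To prove this, set $p=\coker f$ and $q=\coker g$, so $f=\ker p$ and $g=\ker q$, and form the pushout of the semi-stable kernel $g$ along $p$, obtaining a kernel $g'$ (hence a monomorphism) and a map $p'$ with $p'g=g'p$. Reading the cokernel part of Lemma~\ref{preab}(iii) in the two orientations of this square yields $p'=\coker(gf)$ and a factorization $q=\bar q p'$ with $\bar q=\coker g'$. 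A short chase now gives $gf=\ker p'$: if $p't=0$ then $qt=\bar q p't=0$, so $t=gt_1$ factors through $\ker q=g$; then $g'pt_1=p'gt_1=p't=0$, and $g'$ monic forces $pt_1=0$, so $t_1=ft_2$ factors through $\ker p=f$, whence $t=gft_2$ factors uniquely through the monomorphism $gf$. Hence $gf=\ker\coker(gf)$ is a kernel, and the cokernel half of (ii) is dual.

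Part (i) is the delicate one, because in a genuine preabelian category a factor of a composite kernel need not be a kernel; this is exactly where the hypothesis of semi-stability must be spent. I would first reduce the \emph{semi-stable} conclusion to the bare claim (i-a) that if $gf$ is a semi-stable kernel then $f$ is a kernel: an arbitrary pushout of $f$, producing $f'$, completes by pushing $g$ out along the induced edge to a pushout of $gf$; the latter is again a semi-stable kernel (a pushout of the semi-stable kernel $gf$) and its right-hand edge has the form $g'f'$, so (i-a) applied to the pair $(f',g')$ shows $f'$ is a kernel, i.e. $f$ is semi-stable. For (i-a), note that $f$ is monic since $gf$ is. Writing $c=\coker(gf)$, the equality $cgf=0$ lets $f$ factor as $f=ku$ through $k=\ker(cg)$; since $gk$ is annihilated by $c$ it factors through $\ker c=gf$, producing $v$ with $gk=gfv$, and cancelling the monomorphism $gf$ in $gf=gku=gf(vu)$ gives $vu=\id$. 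Thus $u$ is a split monomorphism, and the argument is complete once $u$ is shown to be an isomorphism. This last step is the main obstacle: the only obstruction to $uv=\id$ is a term factoring through $\ker g$, and killing it cannot follow from $gf$ being merely a kernel — one must genuinely invoke the full semi-stability of $gf$, pushing $gf$ out along a suitable morphism (for instance along $u$, or along a morphism detecting the failure of $f$ to be strict) so that the resulting kernel forces the $\ker g$-contribution to vanish. Once $u$ is an isomorphism, $f=ku$ is a kernel, which establishes (i-a) and hence (i); the dual argument handles the statement about semi-stable cokernels.
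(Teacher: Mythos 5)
Your parts (iii) and (ii) are correct. The pasting argument for (iii), the reduction of (ii) to the single claim that a kernel followed by a semi-stable kernel is again a kernel, and the chase proving that claim (via $p'=\coker(gf)$, $q=\bar q p'$, and monicity of $g'$) are all sound. (Note that the paper itself quotes this lemma from \cite{Ge92,RiWa77} without proof, so there is no internal argument to compare against.)

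The genuine gap is in (i), and it is worse than the unfinished step you flag: the strategy you propose cannot be completed. Your reduction to claim (i-a) ($gf$ a semi-stable kernel $\Rightarrow$ $f$ a kernel) is fine, but you then aim to prove that $f=\ker(cg)$ with $c=\coker(gf)$, i.e.\ that your split monomorphism $u$ is an isomorphism. This is false under the hypotheses. Take $A=0$, $f=0\colon 0\to B$, and any $g\colon B\to C$ with $\Ker g\neq 0$. Then $gf=0\colon 0\to C$ is a semi-stable kernel (its pushout along any $0\to X$ is the split monomorphism $X\to X\oplus C$), and $f$ is indeed a kernel, namely $f=\ker\id_B$; but $c=\id_C$, so $k=\ker(cg)=\ker g$ and $u\colon 0\to\Ker g$ is not an isomorphism. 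Thus $f$ is in general \emph{not} the kernel of $cg$ -- you are aiming at the wrong kernel presentation of $f$ -- and no invocation of semi-stability can force the $\ker g$-contribution to vanish.

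The repair can be carried out with exactly the machinery you already established. Push $gf$ out along $f$ itself, obtaining a square $mf=w(gf)$ in which $m\colon B\to P$ is a kernel by semi-stability of $gf$. Since $g\cdot f=\id_C\cdot(gf)$, the universal property gives $\phi\colon P\to C$ with $\phi m=g$ and $\phi w=\id_C$; hence $w$ is a split monomorphism, and $w=\ker(\id_P-w\phi)$. Next, this pushout square is also a pullback: writing $P=\Coker\binom{f}{-gf}$ with $(m\;\,w)=\coker\binom{f}{-gf}$, one has $\binom{f}{-gf}=(\id_B\oplus gf)\binom{f}{-\id_A}$, where $\binom{f}{-\id_A}$ is a split monomorphism and $\id_B\oplus gf$ is a pushout of $gf$ (along $\binom{0}{\id_A}\colon A\to B\oplus A$), so by your (iii) and your key claim from (ii) the composite $\binom{f}{-gf}$ is a kernel; being a kernel which is the canonical candidate $\ker\coker\binom{f}{-gf}=\ker(m\;\,w)$, this says precisely that the square is a pullback. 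Now apply Lemma~\ref{preab}(iii) to that pullback square, whose one leg $w$ is a kernel: $f=\ker\bigl((\id_P-w\phi)m\bigr)$, so $f$ is a kernel. Your reduction then upgrades this to semi-stability of $f$, finishing (i), and the cokernel statement is dual.
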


If the category satisfies the following two weaker axioms dual to one another then it is called
{\it P-semi-abelian} or {semi-abelian in the sense of Palamodov} \cite{Pa71}:  if (\ref{mainsq}) is a pushout and
$g$ is a kernel then $f$ is monic; if (\ref{mainsq}) is a pullback and $f$ is a cokernel then $g$ is
epic. Until recently it was unclear whether every P-semi-abelian category is quasi-abelian (Raikov's
Conjecture); this was disproved by Bonet and Dierolf \cite{BoDi06} and Rump \cite{Ru08,Ru11}. It turned out that,
for instance, the categories of barrelled and bornological spaces are P-semi-abelian but not quasi-abelian
(see \cite{Ru11}). In general preabelian categories, kernels (cokernels) may even push out (pull back)
to zero morphisms (see \cite{Pr00,RiWa77}).

In \cite{KCh72} Kuz$'$minov and Cherevikin proved that a preabelian category
is P-semi-abelian in the above sense if and only if, in the canonical decomposition
of every morphism $\alpha$, $\alpha=(\im\alpha)\bar\alpha\,\coim\alpha$, the central
morphism $\bar\alpha$ is a bimorphism, that is, monic and epic simultaneously.

\begin{lemma}\label{psemiab} \cite{KoK09,KCh72}
The following hold in a P-semi-abelian category{\rm:}

(i) if $gf$ is a kernel then $f$ is a kernel;
if $gf$ is a cokernel then $g$ is a cokernel;

(ii) if $f,g$ are kernels and $gf$ is defined then $gf$
is a kernel; if $f,g$ are cokernels and $gf$ is defined then $gf$
is a cokernel;

(iii) if $gf$ is strict and $g$ is monic then $f$ is strict; if
$gf$ is strict and $g\in P$ then $f$ is strict.
\end{lemma}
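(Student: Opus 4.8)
The plan is to route everything through the Kuz$'$minov--Cherevikin characterization recalled above: in a P-semi-abelian category the central morphism $\bar\alpha$ of the canonical decomposition $\alpha=(\im\alpha)\bar\alpha(\coim\alpha)$ is always a bimorphism. Combined with Lemma~\ref{preab}(i), this yields the two reformulations I will use repeatedly: a \emph{kernel} is exactly a strict monomorphism, and a morphism that is simultaneously a kernel and an epimorphism is an isomorphism. Indeed, if $\beta$ is a kernel then $\beta=\ker\coker\beta$, and if moreover $\beta$ is epic then $\coker\beta=0$, so $\beta=\ker 0=\id$; dually a monic cokernel is an isomorphism. By duality it then suffices to treat the kernel half of each of (i)--(iii), and the strategy is to reduce each to the statement that a suitable bimorphism is an isomorphism.

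For (i), suppose $gf$ is a kernel. Then $gf$ is monic, hence so is $f$; thus $\ker f=0$, $\coim f$ is an isomorphism, and absorbing it I may write $f=(\im f)\bar f$ with $\im f$ a kernel and $\bar f$ epic (a bimorphism, by the characterization). Set $\gamma=g\,(\im f)$, so that $gf=\gamma\bar f=\ker c$ with $c=\coker(gf)$. Since $c\,\gamma\bar f=0$ and $\bar f$ is epic, $c\gamma=0$, so $\gamma$ factors through $\ker c=\gamma\bar f$: there is $\delta$ with $\gamma=\gamma\bar f\delta$. Right-multiplying by $\bar f$ gives $(\gamma\bar f)\,\id=(\gamma\bar f)(\delta\bar f)$; cancelling the monomorphism $gf=\gamma\bar f$ on the left yields $\delta\bar f=\id$, and cancelling the epimorphism $\bar f$ on the right yields $\bar f\delta=\id$. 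Hence $\bar f$ is an isomorphism, $f$ is strict, and being monic it is a kernel.

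For (ii), write $f=\ker p$, $g=\ker q$ with $p=\coker f$, $q=\coker g$, and form the pushout of the two maps out of the domain of $g$:
\[
\CD
B @>g>> C\\
@VpVV @VV{p'}V\\
F @>>{g'}> E
\endCD
\]
Since $p=\coker f$, the pushout part of Lemma~\ref{preab}(iii) identifies $p'=\coker(gf)$. The one genuinely non-formal input enters here: because $g$ is a kernel, the P-semi-abelian axiom forces its pushout $g'$ to be a monomorphism \cite{KCh72}. Now $q(gf)=(qg)f=0$, so $q=rp'$ for some $r$; hence any $t$ with $p't=0$ satisfies $qt=rp't=0$, factors as $t=gt_1$ through $g=\ker q$, and then $0=p't=g'(pt_1)$ with $g'$ monic forces $pt_1=0$, so $t_1=ft_2$ through $f=\ker p$ and $t=(gf)t_2$. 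As $gf$ is monic, this exhibits $gf=\ker p'$, a kernel.

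Finally, (iii) reduces to (i). Assuming $gf$ strict and $g$ monic, monicity of $g$ gives $\ker(gf)=\ker f$, hence $\coim(gf)=\coim f$. Writing $gf=m\,\coim(gf)$ with $m=(\im(gf))\,\overline{gf}$ a kernel (strictness makes $\overline{gf}$ an isomorphism, and a kernel composed with an isomorphism is a kernel) and $f=(\im f)\,\bar f\,\coim(gf)$, then cancelling the epimorphism $\coim(gf)$ gives $m=g\,(\im f)\,\bar f$. Thus the kernel $m$ has $\bar f$ as a right factor, so $\bar f$ is a kernel by (i); being also epic it is an isomorphism, whence $f$ is strict. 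The second clause (with the hypothesis $g\in P$) and all the cokernel halves of (i)--(iii) follow by the dual argument. The main obstacle is step (ii): unlike (i) and (iii), it cannot be reduced to formal cancellation with the bimorphism property, and genuinely requires realizing $\coker(gf)$ as a pushout and invoking the P-semi-abelian stability of the kernel $g$ under that pushout.
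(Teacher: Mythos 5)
Your proposal is being measured against a lemma that the paper itself never proves: Lemma~\ref{psemiab} is quoted from \cite{KoK09,KCh72} without proof, so your argument has to stand on its own, and most of it does. The proof of (i) is correct: using the Kuz$'$minov--Cherevikin characterization to make $\bar f$ epic, and then showing that $\bar f$ splits because $gf=\ker\coker(gf)$, is a clean reduction of ``kernel'' to ``strict mono.'' The reduction of the first clause of (iii) to (i) via $\ker(gf)=\ker f$ is also correct. Part (ii) is correct as well, and you put the P-semi-abelian hypothesis exactly where it must go: identifying $\coker(gf)$ as the pushout of $\coker f$ along $g$ by Lemma~\ref{preab}(iii), and invoking the axiom that the pushout $g'$ of the kernel $g$ is monic; the ensuing verification of the universal property of $\ker p'$ is complete. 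The cokernel halves of (i) and (ii) do follow by duality, since the P-semi-abelian axioms are self-dual.

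The gap is the second clause of (iii), which you dismiss with ``follows by the dual argument.'' It does not: dualizing ``$gf$ strict and $g$ monic $\Rightarrow f$ strict'' yields ``$gf$ strict and $f$ epic $\Rightarrow g$ strict'' --- a hypothesis on $f$ with a conclusion about $g$ --- whereas the clause to be proved has a hypothesis on $g$ and a conclusion about $f$, so it is a genuinely different statement. Moreover, it cannot be proved at all without knowing what the class $P$ is (the symbol is undefined in this paper; it is notation carried over from \cite{KoK09}), and the most natural guess, $P=$ the class of cokernels, actually makes the clause \emph{false}: take $g$ to be the cokernel $B\rightarrow 0$; then $gf$ is strict for every $f:A\rightarrow B$, so the clause would force every morphism to be strict, i.e.\ the category to be abelian, contradicting the existence of non-abelian P-semi-abelian categories. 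So either $P$ denotes some more restrictive class in the source, requiring its own argument, or the clause is a misprint for the dual of the first clause; in either case a one-line appeal to duality cannot cover it, and this clause remains unproven in your proposal.
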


The following lemma is due to Yakovlev \cite{Yak79}.

\begin{lemma}\label{yak}
For every morphism $\alpha$ in a preabelian category,
$\ker\alpha=\ker\coim\alpha$, $\coker\alpha=\coker\im\alpha$.
\end{lemma}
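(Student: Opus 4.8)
The plan is to prove the first identity $\ker\alpha=\ker\coim\alpha$ directly from the definitions and to obtain the second by duality. I would begin by writing $k=\ker\alpha$ and recalling that, by definition, $\coim\alpha=\coker(\ker\alpha)=\coker k$. Thus the claim to be established is precisely $\ker(\coker k)=k$.

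To prove this, I would combine the definition of the image with Lemma~\ref{preab}(i). Since $\im k=\ker\coker k$ by definition, we get $\ker(\coim\alpha)=\ker(\coker k)=\im k$. Now $k=\ker\alpha$ is a kernel, so Lemma~\ref{preab}(i) yields $k=\im k$. Chaining these equalities gives $\ker\coim\alpha=\im k=k=\ker\alpha$, which is the desired conclusion.

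For the dual identity $\coker\alpha=\coker\im\alpha$, I would run the same reasoning in the opposite category. Setting $c=\coker\alpha$, we have $\im\alpha=\ker(\coker\alpha)=\ker c$, hence $\coker\im\alpha=\coker(\ker c)=\coim c$; since $c$ is a cokernel, Lemma~\ref{preab}(i) gives $\coim c=c=\coker\alpha$, as required.

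The argument is short, so the step I would treat most carefully---the only genuine subtlety---is the passage $\ker\coker k=k$ for a kernel $k$: one must invoke that a kernel coincides with its own \emph{image} (not merely that a coimage expression collapses). If one preferred to bypass Lemma~\ref{preab}(i), the same conclusion follows from universal properties: the equality $(\coim\alpha)(\ker\alpha)=0$ exhibits $\ker\alpha$ as factoring through $\ker\coim\alpha$, while the canonical decomposition $\alpha=(\im\alpha)\bar\alpha(\coim\alpha)$ shows that any $\mu$ with $(\coim\alpha)\mu=0$ also satisfies $\alpha\mu=0$, giving the reverse factorization; the two factorizations identify the subobjects. Either route settles the lemma.
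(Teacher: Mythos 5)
Your proof is correct, and in fact the paper offers no proof to compare it with: Lemma~\ref{yak} is stated there as a known result of Yakovlev, with only a citation to \cite{Yak79}. Your main argument is the standard derivation and is as short as possible: since $\coim\alpha=\coker\ker\alpha$ by definition, one has $\ker\coim\alpha=\ker\coker(\ker\alpha)=\im(\ker\alpha)$, and Lemma~\ref{preab}(i) (a morphism is a kernel if and only if it equals its own image) collapses this to $\ker\alpha$; the identity $\coker\alpha=\coker\im\alpha$ follows by the dual chain, exactly as you write it. Your fallback argument via universal properties is also complete and has the merit of not invoking Lemma~\ref{preab} at all: if $(\coim\alpha)\mu=0$ then $\alpha\mu=(\im\alpha)\bar\alpha(\coim\alpha)\mu=0$, while if $\alpha\mu=0$ then $\mu$ factors through $\ker\alpha$ and hence $(\coim\alpha)\mu=(\coker\ker\alpha)(\ker\alpha)(\cdots)=0$; so $\ker\alpha$ and $\ker\coim\alpha$ satisfy the same universal property and therefore coincide, in the sense of equality of kernels (canonical choices, or subobjects) in which such identities are read throughout the paper. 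There are no gaps; within the paper's logical order this is also non-circular, since Lemma~\ref{preab} is established (by citation) before Lemma~\ref{yak}.
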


A sequence $\dots\overset{a}\rightarrow B \overset{b}\rightarrow \dots$ in a preabelian
category is said to be {\it exact at\/} $B$ if $\operatorname{im} a= \ker b$.
As follows from Lemma~\ref{yak}, this is equivalent to the fact that $\coker a = \coim b$.

\section{The Two-Square Lemma}\label{twos}

We begin with a lemma which, being itself of an independent interest, will be used
below. It is a generalization of \cite[Theorem~3]{KCh72} and \cite[Lemma~6]{Ko09}.

\begin{lemma}\label{rumb}
Let
$$
\begin{CD}
A @>p_1>> B_1 @>q_1>> C\\
@| @Vr VV @| \\
A @>>p_2> B_2 @>>q_2> C
\end{CD}
$$
be a commutative diagram in a preabelian category.

(i) If $p_1=\ker q_1$, $q_2 p_2=0$, $p_2$ is monic then $r$ is monic.

(ii) Suppose that $p_1=\ker q_1$, $p_2=\ker q_2$, $p_2$ and $\im q_1$
are semi-stable kernels, and $q_1$ is strict. Then $r$ is a semi-stable kernel.

The dual assertions also hold.
\end{lemma}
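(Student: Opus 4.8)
For (i) the plan is a direct chase, valid in any additive category by testing monomorphy against arbitrary morphisms. Suppose $rx=0$ for some $x\colon T\to B_1$. Commutativity of the right-hand square gives $q_1x=q_2rx=0$, so by $p_1=\ker q_1$ there is a unique $y\colon T\to A$ with $x=p_1y$. Commutativity of the left-hand square then yields $p_2y=rp_1y=rx=0$, and since $p_2$ is monic, $y=0$, whence $x=0$. Thus $r$ is monic. The dual statement (if $q_2=\coker p_2$, $q_1p_1=0$, and $q_1$ is epic, then $r$ is epic) follows by reversing all arrows.

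For (ii) I first note that $r$ is monic: the hypotheses of (i) hold, since a semi-stable kernel is in particular monic and $q_2p_2=0$. Moreover, from $p_2=rp_1$ and Lemma~\ref{semistab}(i) the right factor $p_1$ is itself a semi-stable kernel. Using that $q_1$ is strict, I factor $q_1=je$ with $j=\im q_1$ a semi-stable kernel and $e=\coim q_1=\coker p_1$ a cokernel; by Lemma~\ref{yak}, $p_1=\ker q_1=\ker e$, so $A\xrightarrow{p_1}B_1\xrightarrow{e}I$ is a kernel--cokernel pair.

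Next I would form the pullback $P$ of $j$ and $q_2$, with projections $v\colon P\to B_2$, $u\colon P\to I$ and $q_2v=ju$. Applying Lemma~\ref{preab}(iii) in the two orientations of this pullback, $v=\ker((\coker q_1)q_2)$ is a kernel, and $p_2=\ker q_2=v k_0$ with $k_0:=\ker u$; by Lemma~\ref{semistab}(i), $k_0$ is then a semi-stable kernel. From $q_2r=q_1=je$ the pullback property yields the comparison $w\colon B_1\to P$ with $vw=r$, $uw=e$, and one checks $wp_1=k_0$. Thus $r=vw$ with $v$ a kernel, and it remains to prove $w$ is an isomorphism, for then $r\cong v$ is a kernel. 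Now $w$ is monic, by applying (i) to the two rows $A\xrightarrow{p_1}B_1\xrightarrow{e}I$ and $A\xrightarrow{k_0}P\xrightarrow{u}I$; hence it suffices to construct a section $m\colon P\to B_1$ with $rm=v$, since then $v(wm)=v$ forces $wm=\id_P$, and a monic split epimorphism is an isomorphism.

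The construction of such an $m$ (equivalently, the verification that the square with edges $e,r,j,q_2$ is a pullback, i.e.\ that $r=\ker((\coker q_1)q_2)$) is the key technical step. The inclusion $\im r\subseteq\ker((\coker q_1)q_2)$ is immediate from $(\coker q_1)q_2r=(\coker q_1)q_1=0$, so the content is the reverse lifting of $v$ through $r$, and strictness of $q_1$ is what makes this lifting available through the kernel--cokernel pair $A\to B_1\to I$. The point I expect to be the genuine obstacle is the passage from ``$r$ is a kernel'' to ``$r$ is a semi-stable kernel'': this cannot be read off the pullback alone, because a pullback of a semi-stable kernel need not be semi-stable, and it is precisely here that the hypotheses that \emph{both} $p_2$ and $\im q_1$ be semi-stable must be used together. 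I would establish it either by exhibiting $r$ as a composite of semi-stable kernels via Lemma~\ref{semistab}(ii), or by checking the defining pushout condition directly---forming an arbitrary pushout of $r$, pushing out the whole configuration, and reapplying the kernel part, the relevant maps remaining semi-stable kernels by Lemma~\ref{semistab}(iii). The dual assertions follow by duality.
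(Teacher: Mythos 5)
Your part (i) is correct and is essentially identical to the paper's proof, including the dual statement. Part (ii), however, is a plan whose two load-bearing steps are missing, and they are exactly the points where the preabelian difficulty lives. First, the claim that the square with edges $e,r,j,q_2$ is a pullback (equivalently, that your comparison $w\colon B_1\to P$ is an isomorphism, or that a section $m$ with $rm=v$ exists) is an abelian-category fact whose standard proof is an element chase: a ``point'' $(b_2,i)$ of $P$ is hit by lifting $i$ along the epimorphism $e$ and correcting by an element of $A$. Lifting an arbitrary morphism through a cokernel is precisely what fails in a preabelian category, and neither the strictness of $q_1$ nor the semi-stability hypotheses visibly supplies such a lifting; saying that ``strictness of $q_1$ is what makes this lifting available'' is an expectation, not an argument, and it is not clear the claim is even true at this level of generality. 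Second, even granting the pullback, you correctly note that this only makes $r$ a kernel (Lemma~\ref{preab}(iii)), not a semi-stable one, and your two suggested repairs (exhibit $r$ as a composite of semi-stable kernels, or verify the pushout condition by hand) are left entirely unexecuted. So what you have is a correct reduction (the constructions of $v$, $k_0$, $w$, and the monicity of $w$ are all fine) terminating in two acknowledged but open gaps.

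The paper's proof avoids pullbacks altogether, precisely because pullbacks do not preserve semi-stability of kernels, whereas pushouts do (Lemma~\ref{semistab}(iii)). It factors $q_j$ through $\coim q_j\colon B_j\to K_j$, obtains $w\colon K_1\to K_2$ with $\im q_1=q_2'w$, so that $w$ is a semi-stable kernel by cancellation (Lemma~\ref{semistab}(i)); it then forms the pushout $F$ of $p_1,p_2$ over $A$ (with injections $u_2\colon B_1\to F$, $u_1\colon B_2\to F$) and the pushout $S$ of $w$ and the induced $s\colon K_1\to F$ (with injections $s',w'$), and checks that $\mu:=w'u_1-s'\coim q_2$ satisfies $\mu r=w'u_2$. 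Since $u_2$ and $w'$ are pushouts of the semi-stable kernels $p_2$ and $w$, the composite $w'u_2$ is a semi-stable kernel by Lemma~\ref{semistab}(ii),(iii), and then Lemma~\ref{semistab}(i) yields that $r$ itself is a semi-stable kernel. Note the shape of this argument: one never proves $r$ is a kernel first and upgrades afterwards; kernel-ness and semi-stability arrive in one stroke from left-cancellation applied to $\mu r$. That single trick --- manufacturing some $\mu$ so that $\mu r$ is a composite of semi-stable kernels --- is the idea your proposal lacks, and it is what dissolves both of your gaps simultaneously.
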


\begin{proof} (i) Assume that $rx=0$ and prove that then $x=0$. We have
$q_1 x= q_2 rx=0$. Since $p_1=\ker q_1$, this gives $x=p_1 y$ for some
$y$. Then $p_2 y= r p_1 y= rx=0$. Since $p_2$ is monic, $y$=0 and thus
$x=p_1 y=0$.

(ii) Decompose $q_1$ as $q_1=q'_1 q''_1$, $q_2=q'_2 q''_2$, where
$q''_j=\coim q_j:K_j\rightarrow C$, $j=1,2$. By assumption, $q'_1=\im q_1$.
Since $\coim q_1=\coker p_1$ and $\coim q_2=\coker p_2$, there is a
unique morphism $w:K_1\rightarrow K_2$ such that $w\,\coim q_1=(\coim q_2) r$.
For this $w$, $q'_1=q'_2 w$. Since $q'_1$ is a semi-stable kernel by hypothesis,
so is $w$ (Lemma~\ref{semistab}).

Consider the pushout
\begin{equation}
\CD
A @>p_1>> B_1 \\
@V p_2 VV  @V u_2 VV \\
B_2 @>u_1>> F.
\endCD
\end{equation}
Since $u_1 r p_1=u_1 p_2=u_2 p_1$, we have $(u_1 r - u_2) p_1=0$. Therefore, there exists
a unique morphism $s:K_1\rightarrow F$ with the property $u_1 r - u_2 = s\,\coim q_1$.

Consider the pushout
\begin{equation}
\CD
K_1 @>w>> K_2 \\
@V s VV  @V s' VV \\
F @>w'>> S.
\endCD
\end{equation}
Put $\mu=w'u_1-s'\coim q_2$. We infer
\begin{multline*}
\mu r=(w'u_1-s'\coim q_2)r = w'u_2 + w's\,\coim q_1 - s'(\coim q_2)r \\
=w'u_2  + w's\,\coim q_1 - w's\,\coim q_1 = w'u_2.
\end{multline*}
Thus, $\mu r=w'u_2$. Since $p_2$ and $w$ are semi-stable kernels, so are
$u_2$ and $w'$. Now, by Lemma~\ref{semistab}(ii), $\mu r=w'u_2$ is a semi-stable kernel 
as a composition of semi-stable kernels. Thus, by Lemma~\ref{semistab}(i), $r$ is 
a semi-stable kernel. The lemma is proved.
\end{proof}

We will also need the following preabelian version of Lemma~1 in \cite{FHaHi89}. This also
generalizes Lemma~\ref{preab}(iii).

\begin{lemma}\label{exrows}
If in the commutative diagram
\begin{equation}\label{genker}
\CD
@. B @>\varphi>> C  \\
@. @V\beta VV @V\gamma VV  \\
A' @>>\psi'> B' @>>\varphi'> C'
\endCD
\end{equation}
the square $\varphi'\beta=\gamma\varphi$ is a pullback and the bottom row of~(\ref{genker})
is exact then there is a unique morphism $\psi:A'\rightarrow B$ such that $\beta\psi=\psi'$,
$\varphi\psi=0$. If, in addition, $\bar\psi'$ is epic then the sequence
\begin{equation}\label{seqtop}
A'\overset{\psi}\longrightarrow B \overset{\varphi}\longrightarrow C
\end{equation}
is exact.

The dual assertion about pushouts also holds.
\end{lemma}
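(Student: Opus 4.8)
The plan is to obtain (i) directly from the universal property of the pullback, and then to read off the exactness in (ii) by factoring $\psi$ through $\ker\varphi$ and showing that this factorization is epic, which is exactly where the hypothesis on $\bar\psi'$ is used.

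For (i), I would first note that exactness of the bottom row forces $\varphi'\psi'=0$: since $\im\psi'=\ker\varphi'$, the morphism $\psi'$ factors through $\ker\varphi'$ and so $\varphi'\psi'=0$. Hence the two morphisms $\psi'\colon A'\to B'$ and $0\colon A'\to C$ satisfy $\varphi'\psi'=0=\gamma\cdot 0$, i.e.\ they form a compatible pair over the cospan defined by $\varphi'$ and $\gamma$. Because the right-hand square $\varphi'\beta=\gamma\varphi$ is a pullback, its universal property produces a unique $\psi\colon A'\to B$ with $\beta\psi=\psi'$ and $\varphi\psi=0$, giving existence and uniqueness at once.

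For (ii) the inclusion $\im\psi\le\ker\varphi$ is immediate from $\varphi\psi=0$, so the entire content lies in the reverse inclusion. Write $k=\ker\varphi$; from $\varphi\psi=0$ I get a unique factorization $\psi=k\psi_0$. The crucial identification is $\ker\varphi'=\beta k$: applying Lemma~\ref{preab}(iii) to the pullback square, viewed as the pullback of $\gamma$ and $\varphi'$ with projections $\varphi$ and $\beta$, yields $\ker\varphi'=\beta\ker\varphi=\beta k$, while exactness of the bottom row gives $\ker\varphi'=\im\psi'$. Thus $\beta k$ is a representative of $\im\psi'$, and in particular a monomorphism. Now I compare two factorizations of $\psi'$ through $\beta k$: on one hand $\psi'=\beta\psi=(\beta k)\psi_0$, and on the other hand the canonical decomposition gives $\psi'=(\im\psi')\bar\psi'\coim\psi'=(\beta k)(\bar\psi'\coim\psi')$. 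Cancelling the monomorphism $\beta k$ forces $\psi_0=\bar\psi'\coim\psi'$; since $\coim\psi'$ is a cokernel (hence epic) and $\bar\psi'$ is epic by hypothesis, $\psi_0$ is epic.

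To finish, I use that an epic right factor does not change the cokernel: as $\psi_0$ is epic, $gk\psi_0=0$ is equivalent to $gk=0$, so $\coker(k\psi_0)=\coker k$, whence $\im\psi=\ker\coker(k\psi_0)=\ker\coker k=\im k=k=\ker\varphi$, the final equality being Lemma~\ref{preab}(i) since $k=\ker\varphi$ is a kernel. This yields exactness of~(\ref{seqtop}) at $B$, and the dual assertion about pushouts follows by dualizing the whole argument. I expect the only real obstacle to be the bookkeeping around the identification $\im\psi'=\ker\varphi'=\beta k$: one must invoke Lemma~\ref{preab}(iii) in the correct orientation and then justify cancelling $\beta k$ (legitimate because it is a kernel, hence monic), so that $\psi_0$ genuinely coincides with the canonical co-restriction $\bar\psi'\coim\psi'$ of $\psi'$; everything after that is formal.
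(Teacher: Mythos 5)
Your proof is correct and takes essentially the same route as the paper: both obtain $\psi$ from the pullback's universal property via $\varphi'\psi'=0$, both invoke Lemma~\ref{preab}(iii) to identify $\beta\ker\varphi=\ker\varphi'=\im\psi'$, and both reduce exactness to the epicity of $\bar\psi'\coim\psi'$. The only difference is presentational: the paper defines $\psi=(\ker\varphi)\bar\psi'\coim\psi'$ outright and concludes via $\coker\psi=\coker\ker\varphi=\coim\varphi$, whereas you factor the given $\psi$ through $\ker\varphi$ and cancel the monomorphism $\beta\ker\varphi$ to identify the factor with $\bar\psi'\coim\psi'$ --- slightly more explicit bookkeeping of the same argument.
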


\begin{proof}  The existence and uniqueness follows from the equalities
$\varphi'\psi'=\gamma 0$. Now, suppose that $\bar\psi'$ is epic. Then, by Lemma~\ref{preab}(iii),
$\beta\ker\varphi=\ker\varphi'=\im\psi'$. Put $\psi=(\ker\varphi)\bar\psi'\coim\psi'$.
Then $\coker\psi=\coker\ker\varphi=\coim\varphi$, which is the exactness of
sequence (\ref{seqtop}).
\end{proof}

\begin{theorem}\label{twosql}
Consider a commutative diagram with exact rows of the kind (\ref{main}) in
a preabelian category. Preserve all notations of Theorem~\ref{l2sq}. Then the following hold:

(i) If $\psi'$ is a semi-stable kernel and, in the canonical decomposition of $\varphi$,
$\varphi=(\im\varphi)\bar\varphi\,\coim\varphi$, $\bar\varphi$ is a monomorphism then
$\eta$ is a monomorphism.

If $\varphi$ is a semi-stable cokernel and, in the canonical
decomposition of $\psi'$, $\psi'=(\im\psi')\bar\psi'\coim\psi'$, $\bar\psi'$
is an epimorphism then $\eta$ is an epimorphism.

(ii) If $\psi'$ and $\im\varphi$ are semi-stable kernels and $\varphi$ is strict then
$\eta$ is a semi-stable kernel.

If $\varphi$ and $\coim\psi'$ are semi-stable cokernels and $\psi'$ is
strict then $\eta$ is a semi-stable cokernel.
\end{theorem}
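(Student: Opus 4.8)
The plan is to realize $\eta$ as the middle vertical of a diagram of the shape treated in Lemma~\ref{rumb} and then quote that lemma. Concretely, I would assemble the commutative diagram
\begin{equation*}
\CD
A' @>\tau'>> Q @>\sigma\eta>> C\\
@| @V\eta VV @| \\
A' @>>\psi''>> Q' @>>\sigma>> C
\endCD
\end{equation*}
in which $\psi'':A'\to Q'$ is the morphism furnished by Lemma~\ref{exrows} applied to the pullback~(\ref{pb1}) (so $\sigma'\psi''=\psi'$ and $\sigma\psi''=0$; note $\varphi'\psi'=0$ makes it available), and $\sigma\eta:Q\to C$ is the morphism furnished by the dual of Lemma~\ref{exrows} applied to the pushout~(\ref{po1}); the relations $\sigma\eta\tau=\sigma\rho=\varphi$ and $\sigma\eta\tau'=0$ from Theorem~\ref{l2sq} identify $\sigma\eta$ with that induced map. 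Commutativity of the left square, $\eta\tau'=\psi''$, is read off from $\sigma'\eta=\theta$, $\theta\tau'=\psi'$, and $\sigma\eta\tau'=0$ together with the uniqueness clause of the pullback $Q'$; the right square commutes trivially.

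The crux is to verify for this diagram that $p_1=\tau'=\ker(\sigma\eta)$ and $p_2=\psi''=\ker\sigma$. The two lemmas supply this only at the level of images: Lemma~\ref{exrows} (with $\bar\psi'$ epic) gives $\im\psi''=\ker\sigma$, and its dual (with $\bar\varphi$ monic) gives $\im\tau'=\ker(\sigma\eta)$; both side conditions hold, since $\psi'$ a kernel forces $\bar\psi'=\id$, while in~(i) $\bar\varphi$ is monic by hypothesis and in~(ii) $\varphi$ strict makes $\bar\varphi$ an isomorphism. To promote images to genuine kernels I would exploit semi-stability: as $\sigma'\psi''=\psi'$ is a semi-stable kernel, Lemma~\ref{semistab}(i) makes $\psi''$ a semi-stable kernel, and then, since $\eta\tau'=\psi''$ is a semi-stable kernel, a second application of Lemma~\ref{semistab}(i) makes $\tau'$ a semi-stable kernel. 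In particular $\tau'$ and $\psi''$ are kernels, so by Lemma~\ref{preab}(i) they equal their images, i.e. $\tau'=\ker(\sigma\eta)$ and $\psi''=\ker\sigma$. I expect this to be the main obstacle: without the semi-stability of $\psi'$ one cannot upgrade $\im\tau'$ to $\tau'$, which is precisely why that hypothesis is imposed.

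With the kernels established the two assertions separate cleanly. For the monomorphism part of~(i) I would invoke Lemma~\ref{rumb}(i): $p_1=\tau'=\ker(\sigma\eta)$ is in hand, while $p_2=\psi''$ is monic (because $\sigma'\psi''=\psi'$ is) and $q_2p_2=\sigma\psi''=0$; hence $\eta$ is monic. For the semi-stable-kernel part of~(ii) I would invoke Lemma~\ref{rumb}(ii), for which it remains to see that $q_1=\sigma\eta$ is strict with $\im(\sigma\eta)$ a semi-stable kernel. Here I would use the pushout: by Lemma~\ref{preab}(iii), $\tau$ induces an isomorphism of $\coker\psi$ onto $\coker\tau'$, so $(\coker\tau')\tau$ is a cokernel of $\psi$ and therefore agrees, up to isomorphism, with $\coim\varphi=\coker\psi$; comparing with $\varphi=(\im\varphi)\bar\varphi\,\coim\varphi$ via the uniqueness for $Q$ gives $\sigma\eta=(\im\varphi)\bar\varphi\,c$ for a cokernel $c$. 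Since $\varphi$ is strict, $\bar\varphi$ is an isomorphism, so $\sigma\eta$ is a kernel followed by a cokernel, hence strict by Lemma~\ref{preab}(ii), with $\im(\sigma\eta)=\im\varphi$, a semi-stable kernel by hypothesis. Lemma~\ref{rumb}(ii) then yields that $\eta$ is a semi-stable kernel. The two ``dual'' statements in~(i) and~(ii) follow by running the same argument in the opposite category, using the dual halves of Lemmas~\ref{preab}(iii), \ref{semistab}, \ref{exrows}, and~\ref{rumb}.
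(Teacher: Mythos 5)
Your proposal is correct and follows essentially the same route as the paper: the same three-term diagram with $\eta$ as the middle vertical, the same use of Lemma~\ref{exrows} (and its dual) together with Lemma~\ref{semistab}(i) to upgrade $\im\tau'$ and $\im(\eta\tau')$ to genuine kernels, the same pushout-uniqueness identification $\sigma\eta=(\im\varphi)\bar\varphi\,\coker\tau'$, and Lemma~\ref{rumb} to conclude. The only cosmetic differences are that in part~(i) the paper runs the monomorphism chase directly (which is just the proof of Lemma~\ref{rumb}(i) instantiated) instead of citing that lemma, and it gets $\eta\tau'=\ker\sigma$ from Lemma~\ref{preab}(iii) applied to the pullback rather than from your exactness-plus-semi-stability argument.
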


\begin{proof} (i) Since $\psi'=\sigma'\tau'$ is a semi-stable kernel, so is 
$\tau'$ (Lemma~\ref{semistab}). 

The commutative diagram
$$
\begin{CD}
A @>\psi>> Q' @>\varphi>> C\\
@V\alpha VV @VV\tau V  @| \\
A' @>>\tau'> Q @>>\sigma\eta> C
\end{CD}
$$
has a pushout on the left and an exact top row, and is such that $\bar\varphi$
is monic. By Lemma~\ref{preab}(iii), we infer that $\tau'=\ker(\sigma\eta)$.
Assume now that $\eta z=0$ for some $z:Z\rightarrow Q$.
We have $\sigma\eta z=0$, and hence $z=\tau' z'$ for some $z'$. We infer
$$
\psi' z'= \theta\tau' z'=\theta z=\sigma'\eta z=0.
$$
Since $\psi'$ is a monomorphism, $z=0$. Thus, $\eta$ is a monomorphism.

The second assertion of~(i) is dual to the first.

(ii) We have already noticed that $\tau'=\ker(\sigma\eta)$. Note also that $\eta\tau'=\ker\sigma$.
Indeed, we have the commutative diagram
$$
\begin{CD}
A' @>\eta\tau'>> Q' @>\sigma>> C\\
@| @V\sigma' VV @VV\gamma V \\
A' @>>\psi'> B' @>>\varphi> C'
\end{CD}
$$
in which $\psi'=\ker\varphi'$ and the square on the right is a pullback.
Hence, $\eta\tau'=\ker\sigma$.

Since $\tau\psi=\tau'\alpha$ is a pushout, we have $(\coker\tau')\tau=\coker\varphi=\coim\psi$.
Hence,
$$
(\im\varphi)(\coker\tau')\tau=(\im\varphi)\coim\varphi=\varphi=\sigma\eta\tau.
$$
Moreover, $(\im\varphi)(\coker\tau')\tau'=0$ and $\sigma\eta\tau'=0$. We infer that
$$
(\sigma\eta-(\im\varphi)\coker\tau')\tau=0, \quad (\sigma\eta-(\im\varphi)\coker\tau')\tau'=0.
$$
Since the zero morphism $0:Q\rightarrow C$ is the only morphism $y$ for which $y\tau=0$
and $y\tau'=0$, we infer that $(\im\varphi)\coker\tau'-\sigma\eta=0$. Therefore,
the morphism $\sigma\eta=(\im\varphi)\coker\tau'$ is strict.

Now, we arrive at the commutative diagram
$$
\begin{CD}
A' @>\tau'>> Q @>\sigma\eta>> C\\
@| @V\eta VV @| \\
A' @>>\eta\tau'> Q' @>>\sigma> C,
\end{CD}
$$
where $\tau'=\ker(\sigma\eta)$, $\eta\tau'=\ker\sigma$, $\eta\tau'$ a semi-stable
kernel (because $\psi'=\sigma'\eta\tau'$ is a semi-stable kernel), $\sigma\eta$
is strict, and $\im(\sigma\eta)=\im\varphi$ is a semi-stable kernel. By Lemma~\ref{rumb},
we see that $\eta$ is a semi-stable kernel.

The first assertion of ~(ii) is proved, and the second is dual to the first.
\end{proof}

Observe that the only thing we really need from the semi-stability of $\psi'$ (or $\varphi$)
in the proof of Theorem~\ref{twosql}(i) is the implication 
$$
\psi'~\mbox{is a kernel} 
\Longrightarrow \tau'~\mbox{is a kernel} \quad (\varphi~\mbox{is a cokernel} 
\Longrightarrow \sigma~\mbox{is a cokernel}).
$$
By Lemma~\ref{psemiab}(i), this assertion holds for arbitrary kernels (cokernels)
in a P-semi-abelian category. Thus, we have

\begin{corollary}\label{isom}
Consider a commutative diagram with exact rows of the kind (1) in a P-semi-abelian category.
Then the following hold.

(i) If $\psi'$ is a kernel then $\eta$ is a monomorphism. If $\varphi$ is a cokernel 
then $\eta$ is an epimorphism.

(ii) If $\psi'$ is a semi-stable kernel and $\varphi$ is a cokernel (or if $\psi'$ is a kernel and 
$\varphi$ is a semi-stable cokernel) then $\eta$ is an isomorphism.

\end{corollary}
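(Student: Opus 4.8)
The plan is to obtain both parts of the corollary as consequences of Theorem~\ref{twosql} together with the remark immediately preceding the corollary, supplemented by the Kuz$'$minov--Cherevikin description of P-semi-abelian categories. First I would record that in a P-semi-abelian category the central factor $\bar\alpha$ in the canonical decomposition of every morphism $\alpha$ is a bimorphism; in particular $\bar\varphi$ is automatically monic and $\bar\psi'$ is automatically epic. This disposes for free of one of the two hypotheses in each half of Theorem~\ref{twosql}(i).

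For part~(i) I would follow the remark literally. The proof of Theorem~\ref{twosql}(i) invokes the semi-stability of $\psi'$ only to know that $\tau'$ is a kernel, and this was extracted from the factorization $\psi'=\theta\tau'$ supplied by Theorem~\ref{l2sq}. Since Lemma~\ref{psemiab}(i) already gives ``$\psi'=\theta\tau'$ a kernel $\Rightarrow$ $\tau'$ a kernel'' in a P-semi-abelian category, the whole argument goes through under the sole assumption that $\psi'$ is a kernel, yielding $\eta$ monic. The epimorphism statement is the dual: when $\varphi$ is a cokernel, the factorization $\varphi=\sigma\rho$ and Lemma~\ref{psemiab}(i) make $\sigma$ a cokernel, while $\bar\psi'$ is epic automatically.

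For part~(ii) the strategy is to combine the monic/epic information from part~(i) with a ``kernel/cokernel'' upgrade coming from Theorem~\ref{twosql}(ii). In the first case ($\psi'$ a semi-stable kernel, $\varphi$ a cokernel) I would note that a cokernel $\varphi$ is strict and has $\im\varphi=\ker\coker\varphi=\ker 0$ an isomorphism, hence a semi-stable kernel; thus the hypotheses of Theorem~\ref{twosql}(ii) are met and $\eta$ becomes a (semi-stable) kernel, say $\eta=\ker h$, while part~(i) makes $\eta$ epic. The second case is entirely dual, using the dual of Theorem~\ref{twosql}(ii) to make $\eta$ a cokernel and part~(i) to make it monic.

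The main obstacle is precisely the final step, since in a preabelian category a bimorphism need not be an isomorphism, so knowing only that $\eta$ is monic and epic would be insufficient. The resolution is the upgrade of one of the two properties to ``kernel'': once $\eta=\ker h$ is also epic, the relation $h\eta=0$ forces $h=0$, whence $\eta=\ker 0$ is an isomorphism (and dually a monic cokernel is an isomorphism). I would therefore take care to check that the hypotheses of Theorem~\ref{twosql}(ii) really do hold under the weaker assumptions of the corollary, in particular that a plain cokernel $\varphi$ furnishes both the strictness and the ``$\im\varphi$ a semi-stable kernel'' condition, so that the kernel property of $\eta$ is genuinely available to pair with its epicness.
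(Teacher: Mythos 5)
Your proposal is correct and is essentially the paper's own (largely implicit) derivation: part (i) is exactly the remark preceding the corollary --- Lemma~\ref{psemiab}(i) applied to the factorizations $\psi'=\theta\tau'$ and $\varphi=\sigma\rho$ replaces semi-stability in Theorem~\ref{twosql}(i), with the Kuz$'$minov--Cherevikin bimorphism characterization supplying ``$\bar\varphi$ monic'' and ``$\bar\psi'$ epic'' for free --- and part (ii) is obtained, as intended, by noting that a cokernel $\varphi$ is strict with $\im\varphi$ an isomorphism (hence a semi-stable kernel), so Theorem~\ref{twosql}(ii) makes $\eta$ a kernel, which together with the epicness from part (i) forces $\eta$ to be an isomorphism. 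Your emphasis on the final step (an epic kernel is invertible, whereas a mere bimorphism need not be) is precisely the point that closes the argument in the P-semi-abelian setting.
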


\section{Two Definitions of a Connected Morphism}\label{conn}
Consider the commutative diagram
\begin{equation}\label{snake}
\CD
@. A @>\psi>> B @>\varphi>> C @>>> 0 \\
@. @V\alpha VV @V\beta VV @V\gamma VV @. \\
0 @>>> A' @>>\psi'> B' @>>\varphi'> C' @.
\endCD
\end{equation}
with $\psi'=\ker\varphi'$ and $\varphi=\coker\psi$
in a preabelian category.

As in the abelian  case, (\ref{snake}) gives rise
to two parts of a $\Ker$-$\Coker$-sequence (the composition of two consecutive arrows is zero):
$$
\Ker\alpha \overset\varepsilon\rightarrow \Ker\beta \overset\zeta\rightarrow
\Ker\gamma
$$
and
$$
\Coker\alpha \overset\tau\rightarrow
\Coker\beta \overset\theta\rightarrow\Coker\gamma.
$$

In contrast to the case of an abelian category (or even
a Grandis-homological \cite{Grand91-1} or a quasi-abelian \cite{KoK00}) category,
for preabelian categories, it is in general impossible to construct
a natural connecting morphism $\delta:\Ker\gamma\rightarrow\Coker\alpha$.
We will duscuss two constructions of $\delta$, one going back
to Andr\'e--MacLane, and the other based on the Two-Square Lemma,
which was proposed by Fay--Hardie--Hilton in \cite{FHaHi89}
for abelian categories and adapted to the preabelian case by Generalov
in \cite{Ge94}.

\subsection{The Andr\'e--MacLane construction}

According to \cite{Be79}, the following construction, described in \cite[p.~203]{MacL72}
for abelian categories, is due to Andr\'e--MacLane. It was used 
in \cite{KoK00,KoK09} for quasi-abelian and $P$-semi-abelian categories.

Let
\begin{equation}\label{snpb}
\CD
X @>s>> \Ker\gamma \\
@VuVV  @VV\ker\gamma V \\
B @>>\varphi> C
\endCD
\end{equation}
be a pullback and let
\begin{equation}\label{snpo}
\CD
A' @>\psi'>> B' \\
@V\coker\alpha VV  @VVvV \\
\Coker\alpha @>>t> Y
\endCD
\end{equation}
be a pushout.

Instead of semi-stability properties of universal nature,
impose on our situation appropriate ad hoc ``modularity''
conditions a la Grandis \cite{Grand91-1}:

\textbf {Assumptions A.}\,\,{\textit In (\ref{snpb})
$s$ is epic and in (\ref{snpo}) $t$ is a kernel.}

Assumptions~A are fulfilled in a preabelian category if
$\psi'$ is a semi-stable kernel and $\varphi$ is a semi-stable cokernel.
In a P-semi-abelian category, the semi-stability of $\psi'$
is already enough.

Since (\ref{snpo}) is a pushout,
$(\coker t)v=\coker\psi'=\coim\varphi'$. If we put $(\im\varphi')\bar\varphi'=\chi$
then $\varphi'=\chi(\coker t)v$. We have
$$
v\beta\psi=v\psi'\alpha=t(\coker\alpha)\alpha=0.
$$
Therefore, $v\beta=n\varphi$ for some unique $n$. In the dual manner,
$\psi'\beta u=0$, and hence $\beta u=\psi'm$ for a unique morphism $m$.
We infer
$$
(\coker t)n(\ker\gamma)s= (\coker t)n\varphi u = (\coker t)v\beta u
=(\coker\psi')\psi' m=0.
$$
Since $s$ is epic, this implies that $(\coker t)n\ker\gamma=0$. Since
$t=\ker\coker t$, we conclude that $n\ker\gamma=t\delta_I$ for some unique
$\delta_I$. This morphism $\delta_I$ is characterized uniquely by the property
\begin{equation}\label{defdel}
t\delta_I s=v\beta u.
\end{equation}

By duality, consider

\textbf{Assumptions A$^*$.}\,\,{\textit In (\ref{snpb})
$s$ is a cokernel and in (\ref{snpo}) $t$ is monic.}

In this case, we also obtain a morphism $\delta_I$ characterized by
(\ref{defdel}). Therefore, the two morphisms coincide provided that $s$ is a cokernel
and $t$ is a kernel simultaneously.

\subsection{The Fay--Hardie--Hilton--Generalov construction}
Consider diagram~(\ref{snake}) and suppose the fulfillment of one of the following 
conditions (i) and (ii).

(i) The ambient category is preabelian, $\psi'$ is a semi-stable kernel, and 
$\varphi$ is a semi-stable cokernel;

(ii) The ambient category is P-semi-abelian and $\psi'$ is a semi-stable kernel {\em or} 
$\varphi$ is a semi-stable cokernel.
 
Below we use all notations of the previous subsection and Section~\ref{twos}.

Generalov's Theorem (Theorem~\ref{geniso}) or Theorem~\ref{twosql} for~(i) 
and Corollary~\ref{isom} for~(ii) imply that 
in these cases the morphism $\eta:Q\rightarrow Q'$ of \cite{FHaHi89} is an isomorphism,
and so we may assume that $Q=Q'$, $\eta=\id_Q$.
Since (\ref{po1}) is a pushout, $\coker\tau=(\coker\alpha)\tau'$;
by duality, since (\ref{pb1}) is a pullback,
$\ker\sigma'=\sigma(\ker\gamma)$. Put
$$
\delta_{II}=(\coker\tau)\ker\sigma'.
$$

\begin{theorem}
The equality $\delta_{II}=-\delta_I$ holds.
\end{theorem}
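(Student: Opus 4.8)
The plan is to compare $\delta_I$ and $\delta_{II}$ by testing both against the same pullback/pushout data, using the fact that under our hypotheses $\eta=\id_Q$ forces the two auxiliary objects to coincide and renders the morphisms $\theta,\rho,\sigma,\sigma',\tau,\tau'$ mutually compatible. Since $\delta_I$ is characterized uniquely by equation~(\ref{defdel}), namely $t\delta_I s=v\beta u$, the cleanest route is to show that $\delta_{II}$ (up to sign) satisfies the same defining relation, or equivalently to work out how both morphisms act after precomposing with the epic $s$ from~(\ref{snpb}) and composing with the monic portions coming from~(\ref{snpo}). Concretely, I would first record the identifications available once $Q=Q'$: the pullback~(\ref{pb1}) gives $\ker\sigma'=\sigma\ker\gamma$, the pushout~(\ref{po1}) gives $\coker\tau=(\coker\alpha)\tau'$, and from Theorem~\ref{l2sq} we have $\sigma'\eta=\theta$, $\eta\tau=\rho$, $\sigma\eta\tau'=0$, together with $\theta\tau=\beta$, $\theta\tau'=\psi'$, $\sigma\rho=\varphi$, $\sigma'\rho=\beta$.

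Next I would connect the two pullback squares~(\ref{snpb}) and~(\ref{pb1}), and the two pushout squares~(\ref{snpo}) and~(\ref{po1}). Both~(\ref{snpb}) and~(\ref{pb1}) are pullbacks of $\gamma$ along maps into $C'$ (one along $\ker\gamma$ implicitly through $\varphi$, the other along $\varphi'$), so the morphisms $u,s$ of~(\ref{snpb}) factor through $\rho$ and $\sigma$; explicitly $u$ should satisfy $\rho u = (\text{the canonical map } X\to Q')$ and $\sigma(\rho u)=\varphi u=(\ker\gamma)s$. Dually, $v,t$ of~(\ref{snpo}) relate to $\theta$ and $\tau$ via the universal property of the pushout~(\ref{po1}), giving a factorization of $v$ through $\theta$ and of $t$ through $\tau'$. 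Having set up these comparison maps, I would substitute into $t\delta_{II}s$ (or rather compose $\delta_{II}=(\coker\tau)\ker\sigma'$ appropriately) and use the relations above to rewrite it as $v\beta u$ up to a sign, the sign emerging from the antisymmetry in how the connecting arrow traverses the diagram in opposite directions in the two constructions.

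The main obstacle, as I see it, is bookkeeping the sign correctly and justifying the factorizations through the universal properties rigorously rather than heuristically. The sign $-1$ is not an accident: it reflects that $\delta_I$ is built by going \emph{down-then-across} (pull back $\ker\gamma$ to $B$, push $\psi'$ forward) whereas $\delta_{II}=(\coker\tau)\ker\sigma'$ traverses the composite object $Q$ in the opposite orientation, so when one equates them against the common test relation~(\ref{defdel}), the orientations differ by a sign. I expect the delicate step to be verifying that the induced map from~(\ref{snpb}) into the pullback~(\ref{pb1}) is compatible, on the nose, with the map from~(\ref{po1}) into~(\ref{snpo}), because this is where the epic-ness of $s$ and the monic-ness of $t$ (Assumptions~A) must be invoked to cancel the ambiguity and pin down the relation $t(-\delta_{II})s=v\beta u$, which by the uniqueness in~(\ref{defdel}) yields $-\delta_{II}=\delta_I$, i.e. $\delta_{II}=-\delta_I$.
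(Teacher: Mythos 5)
Your overall architecture coincides with the paper's and is sound: under Assumptions A the morphism $\delta_I$ is uniquely determined by relation (\ref{defdel}) (because $t$ is monic and $s$ is epic), so it suffices to verify $t\delta_{II}s=-v\beta u$. But your proposal stops exactly where the proof has to happen. The step you defer --- ``use the relations above to rewrite it as $v\beta u$ up to a sign'' --- is the entire content of the theorem, and your explanation of the sign (``antisymmetry in how the connecting arrow traverses the diagram'') is a heuristic, not an argument: nothing in your setup produces a minus sign. What is actually needed are two identities you never state, and they require the morphisms $m$ and $n$ of the Andr\'e--MacLane construction (defined by $\psi'm=\beta u$ and $n\varphi=v\beta$), which your computation never invokes. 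Writing $\delta_{II}=\delta_1\delta_2$ with $\delta_1=\coker\tau:Q\to\Coker\alpha$ and $\delta_2=\ker\sigma':\Ker\gamma\to Q$ (using $\eta=\id$), one shows, by testing against $\tau$ and $\tau'$ and using the uniqueness of morphisms out of the pushout (\ref{po1}), that
$$
t\delta_1=v\sigma'-n\sigma,
$$
and dually, via the pullback (\ref{pb1}), that
$$
\delta_2 s=\tau u-\tau'm.
$$
Each right-hand side is a \emph{difference} of two terms: this is where the sign lives. Expanding and using $\sigma'\tau=\beta$, $\sigma'\tau'=\psi'$, $\sigma\tau=\varphi$, $\sigma\tau'=0$, $n\varphi=v\beta$, $\psi'm=\beta u$, one gets
$$
t\delta_{II}s=(v\sigma'-n\sigma)(\tau u-\tau'm)=v\beta u-v\psi'm-v\beta u+0=-v\beta u,
$$
and the uniqueness in (\ref{defdel}) finishes the proof. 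Without the two difference formulas your plan cannot be completed.

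Your comparison maps would in fact lead to these same identities, but you do not carry this out: $\rho u-(\ker\sigma')s$ is killed by $\sigma$, hence factors as $\tau'k$ through $\tau'=\ker\sigma$, and applying $\sigma'$ together with the monicity of $\psi'$ gives $k=m$ --- this is precisely $\delta_2 s=\tau u-\tau'm$; dually for $v\theta$ versus $t\,\coker\tau$. Note also two inaccuracies. First, (\ref{snpb}) is not a pullback ``of $\gamma$ along a map into $C'$''; it is the pullback of $\varphi$ and $\ker\gamma$ over $C$, so your justification for the factorization of $u,s$ through $\rho$ and $\sigma$ does not stand as stated (though the composite $\rho u$ of course exists and is the right object to compare with $(\ker\sigma')s$). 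Second, the epicness of $s$ and monicity of $t$ are not what ``pins down'' your factorizations; those factorizations come from the universal properties of (\ref{po1}), (\ref{pb1}) and the monicity of $\psi'$, while Assumptions A are needed only at the very last step, for the uniqueness in (\ref{defdel}).
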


\begin{proof}
Prove that the morphism $-\delta_{II}$ meets (\ref{defdel}), that is,
that $t\delta_{II}s=-v\beta u$. 

We have
\begin{gather*}
(v\sigma'-n\sigma)\tau'=v\sigma'\tau'-n\sigma\tau'=v\psi'=t\coker\alpha=t \delta_1\tau', \\
(v\sigma'-n\sigma)\tau=v\beta-n\varphi=v\beta-v\beta=0. 
\end{gather*}
Thus, $(t \delta_1-(v\sigma'-n\sigma))\tau'=0, \quad (t \delta_1-(v\sigma'-n\sigma))\tau=0$,
Hence, since $\gamma\sigma=\varphi'\sigma'$ is a pullback, this implies that
$t\delta_1=v\sigma'-n\sigma$. By duality, $\delta_2 s=\tau u-\tau'm$. Consequently,
\begin{multline*}
t\delta_{II}s= t\delta_1\delta_2 s = (v\sigma'-n\sigma)(\tau u-\tau'm) \\
= v\sigma'\tau u - v\sigma'\tau'm - n\sigma\tau u + n\sigma\tau' m 
= v\beta u - v\psi' m - v\beta u = -v\beta u.
\end{multline*} 

The theorem is proved.
\end{proof}

Even having a connecting morphism $\delta:\Ker\gamma\rightarrow\Coker\alpha$, we in general
cannot assert the exactness of the corresponding $\Ker$-$\Coker$-sequence. This exactness
usually requires additional conditions like strictness or semi-stability (see 
\cite{Ge94,GlK02,Grand91-1,KoK00,KoK09}.

\end{document}